\newtheorem{tw}{Theorem}[section]
\newtheorem{lm}[tw]{Lemma}
\newtheorem{pr}[tw]{Proposition}
\theoremstyle{definition}
\newtheorem{df}{Definition}[section]
\DeclareMathOperator{\diam}{diam}
\newcommand{\R}{\mathbb{R}}
\newcommand{\Z}{\mathbb{Z}}
\newcommand{\N}{\mathbb{N}}
\newcommand{\T}{\mathbb{T}}
\newcommand{\cQ}{\mathcal{Q}}
\newcommand{\cB}{\mathcal{B}}
\newcommand{\cC}{\mathcal{C}}
\newcommand{\cP}{\mathcal{P}}
\newcommand{\cT}{\mathcal{T}}
\newcommand{\cI}{\mathcal{I}}
\newcommand{\1}{\mathbbm{1}}
\newcommand{\bea}{\begin{eqnarray}}
  \newcommand{\eea}{\end{eqnarray}}
  \newcommand{\beab}{\begin{eqnarray*}}
  \newcommand{\eeab}{\end{eqnarray*}}
  \newcommand{\be}{\begin{equation}}
  \newcommand{\ee}{\end{equation}}
\providecommand{\noopsort}[1]{} 
\title{On rank of von Neumann special flows}
\author{Adam Kanigowski, Anton Solomko}
\begin{document}
\maketitle

\begin{abstract}
We prove that special flows over an ergodic rotation of the circle under a $C^1$ roof function with one discontinuity do not have local rank one.
In particular, any such flow has infinite rank.
\end{abstract}

\section{Introduction}

Special flows built over the rotation $R_\alpha$ of the circle by an irrational number $\alpha$ and under a piecewise $C^1$-function $f$ with non-zero sum of jumps were introduced and studied by J.~von~Neumann in \cite{vN32}.
He proved that such flows have continuous spectrum (i.e.\ are weakly mixing) for each irrational rotation.
The weak mixing result was generalised in two directions:
the $C^1$-condition on $f$ was replaced by absolute continuity in \cite{ILM99},
while A.~Katok, \cite{Ka01}, proved weak mixing replacing $R_\alpha$ by any ergodic interval exchange transformations.
Recall also that it follows from a result by A.~Ko\v{c}ergin, \cite{Ko72}, that von Neumann flows are never mixing.
As a matter of fact K.~Fr\c{a}czek and M.~Lema\'{n}czyk, \cite{FL04}, proved that von Neumann flows are spectrally disjoint from all mixing flows.
Moreover, whenever $\alpha$ has bounded type, von Neumann flows are mildly mixing, \cite{FL06}.

On the other hand, nothing is known about spectral multiplicity and rank properties of von Neumann flows. Recall that rank yields an upper bound on the spectral multiplicity \cite{Ki}.

In the paper we show that the rank of von Neumann flows for which $f$ has one discontinuity is infinite.  
More precisely, let $\cT=(T_t)_{t\in\R}$ be a von Neumann special flow over the rotation $R_\alpha\colon\T\to\T$ by an irrational $\alpha$ and under a roof function $f\colon \T\to\R_+$ of the form
\begin{equation}\label{roof}
f(x)=g(x)+A\{x\}+c,
\end{equation}
where $g\in C^1(\T)$, $\int_\T g(x)dx=0$, $A\neq 0$, $c\in\R$ is such that $f>0$ and $\{x\}$ stands for the fractional part of $x$.
Our main result is the following:
\begin{tw}\label{mainth}
The flow $\cT$ does not have local rank one.
\end{tw}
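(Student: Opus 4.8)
The plan is to argue by contradiction and to play the almost rigid ``parallel transport'' forced by a tall Rokhlin tower against the divergence of nearby orbits of $\cT$ produced by the jump of the roof. Suppose $\cT$ has local rank one with some constant $\beta>0$. Unwinding the definition, one obtains heights $h_n\to\infty$ together with Rokhlin towers of base $B_n$ and height $h_n$ whose columns $\bigcup_{0\le t<h_n}T_tB_n$ have measure at least $\beta$, and with the property that arbitrary measurable sets are approximated in measure by unions of levels $T_tB_n$. The first, and conceptually most important, step is to extract from this a \emph{constant-time-shift matching}: for a set of pairs $(p,q)$ of positive measure, with $p$ and $q$ close in $\T^f$ but not on a common $\cT$-orbit, there is a constant $\gamma=\gamma(p,q)$ such that $T_tq$ stays $\vep$-close to $T_{t+\gamma}p$ for all but an $\vep$-fraction of $t\in[0,h_n]$. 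The point of rank one (as opposed to Kakutani equivalence) is that the shift $\gamma$ is a genuine \emph{constant}, not a time reparametrisation; this rigidity is what I will contradict.

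To feed the special structure into this, I would first produce, via a Fubini argument on the base circle, a positive-measure family of matched pairs of the form $p=(x,r)$, $q=(x+\delta,r)$ with the \emph{base distance} $\delta$ bounded below; this is possible because the column has measure $\ge\beta$, so for small fixed $\delta$ its intersection with its own horizontal $\delta$-translate still has measure close to $\beta$. For such a pair, as long as the two orbits cross the roof the same number $N\approx t/\overline f$ of times (here $\overline f=\int_\T f=c+A/2$), whether $T_tq$ can be $\vep$-close to $T_{t+\gamma}p$ is governed by the fibre gap $f_N(x+\delta)-f_N(x)$. Writing $f=g+A\{\cdot\}+c$ and separating the smooth and the jump contributions yields
\[
f_N(x+\delta)-f_N(x)=\big(g_N(x+\delta)-g_N(x)\big)-A\,D_N(x,\delta),\qquad D_N(x,\delta)=\#\{0\le j<N:R_\alpha^jx\in(1-\delta,1)\}-\delta N,
\]
so that, up to the smooth term, the gap equals $-A$ times the signed discrepancy $D_N$ of the orbit of $x$ with respect to the gap interval $(1-\delta,1)$.

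Since $g\in C^1(\T)$ has zero mean, the Denjoy--Koksma inequality along the continued-fraction denominators of $\alpha$ keeps $g_N$, and hence the smooth term, uniformly bounded, so the behaviour of the gap is dictated by $-A\,D_N$; this is where $A\neq0$ is essential. The heart of the argument is then to show that $D_N(x,\delta)$ does not stay within $\vep/|A|$ of any single constant for more than a $(1-\vep)$-fraction of $N\in[0,h_n/\overline f]$: every time exactly one of the two orbits crosses the single discontinuity the gap jumps by the fixed amount $\pm A$, these crossings recur with frequency $\asymp\delta>0$, and $D_N$, while returning to bounded values at the denominators, genuinely oscillates with amplitude exceeding $\vep/|A|$ over the intermediate scales and spends a definite proportion of time away from any prescribed value. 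Consequently $T_tq$ leaves the $\vep$-neighbourhood of $T_{t+\gamma}p$ for a proportion of $t$ bounded below independently of $\gamma$, contradicting the matching of the first step once $\vep<|A|/2$ and $h_n$ is large. This would give the theorem.

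The main obstacle is twofold and lies precisely in the two ``soft'' steps above. First, turning local rank one into the constant-shift matching on a set of genuinely nearby pairs of positive measure must be done carefully: the levels of the tower are measure-theoretic, not geometric, objects, so one has to combine the level approximation with a density/Fubini argument to guarantee matched pairs whose base coordinates differ by a fixed $\delta$ and along which the roof-crossing counts agree for most of the time. Second, and more seriously, one must prove the non-concentration of the discrepancy $D_N(x,\delta)$: although the single jump and the $C^1$ hypothesis reduce the gap to $-A\,D_N$ plus a Denjoy--Koksma-bounded error, quantifying that $D_N$ oscillates enough --- rather than hovering near one value, in which case a single constant $\gamma$ would absorb it and rank one would survive --- is a genuinely arithmetic statement about Birkhoff sums of an indicator over $R_\alpha$, and controlling it uniformly over the (uncontrolled) heights $h_n$ handed to us by the rank-one assumption is where the real work will be.
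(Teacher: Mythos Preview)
Your overall strategy --- extract from a tall tower a pair of Hamming-matched nearby points, then show the discontinuity of $f$ forces them apart for a definite fraction of the time window --- is exactly the paper's. But two concrete choices in your plan would derail it. First, the smooth term is not controlled as you claim: Denjoy--Koksma bounds $g_{q_k}$ along denominators, not $g_N$ for arbitrary $N$, and the difference $g_N(x+\delta)-g_N(x)$ is only $o(N\delta)$ (ergodic theorem for $g'$), which for \emph{fixed} $\delta$ and $N\sim h_n\to\infty$ diverges. It can therefore swamp $A\,D_N$ (for bounded-type $\alpha$ the discrepancy is merely $O(\log N)$), so the gap is not ``dictated by $-A\,D_N$'' and the non-concentration of $D_N$ is not the right target. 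Second, your translate-intersection argument does not produce a fixed $\delta$: the column changes with the tower, and $L^1$-continuity of translation only gives a $\delta$ depending on that particular column, i.e.\ on $h_n$.

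The paper resolves both issues by working at the parabolic scale $\|x_0-y_0\|\asymp\beta/H$ (Lemma~\ref{lem2}): since the base of the tower carries $\nu$-mass $\gtrsim\beta/H$, one can find matched points whose base coordinates differ by at least $\beta/(10H)$ both at time $0$ and at time $H$. At this scale, for $n\lesssim H$ one has $n\|x-y\|=O(1)$, so the smooth term is $o(1)$ (Lemma~\ref{c1}), the slope contribution is $O(1)$, and $[x,y]$ meets $0$ only $O(1)$ times. The divergence mechanism (Proposition~\ref{mainprop}) is then a clean dichotomy: either $[x,y]$ avoids $0$ for order $1/\|x-y\|$ steps forward or backward, in which case the \emph{linear drift from the slope} alone produces a gap of definite size on a tenth of that window; or it hits $0$ within $q_n$ steps, and the single jump of size $A$ persists for the next $q_n$ steps. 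A Vitali covering by such local windows then gives $\lambda(F)\gtrsim H$. In short, the missing idea is the scale $\|x-y\|\sim 1/H$, and with it the realisation that separation is driven by the slope (with at most boundedly many jump corrections), not by long-time oscillation of an interval discrepancy.
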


The key property which is used to prove Theorem \ref{mainth} is {\em slow divergence} of orbits of nearby points in the flow direction.
Such way of divergence of orbits is characteristic for {\em parabolic systems}.
It was first observed by M.~Ratner for the class of horocycle flows, \cite{Ra83}.
Later this property (called Ratner's property) was shown to hold for some von Neumann flows under the additional assumption that $\alpha$ has bounded type \cite{FL06} and for some mixing, smooth flows on surfaces \cite{FK}, \cite{KKU}.
In all these papers, Ratner's property was used to enhance mixing properties (weak mixing to mild mixing and mixing to multiple mixing).
In \cite{K16}, a property of Ratner's type, called in \cite{K16} {\em parabolic divergence}, was used to compute {\em slow entropy} of some  mixing smooth surface flows with non-degenerate saddles.
A variant of the parabolic divergence property is the main ingredient in the proof of Theorem \ref{mainth}.

Let us shortly describe this property in our setting. Notice that the divergence of orbits of two close points is caused by two effects: slow (uniform) divergence by hitting the roof and fast (non-uniform) divergence by hitting the discontinuity. It is the first type of divergence which is characteristic for parabolic systems. It turns out that either going forward or backward in time, we can always avoid the discontinuity long enough to observe the effect of the uniform divergence (see Lemma~\ref{lin} and Proposition~\ref{mainprop}).

\section{Basic definitions}

We denote by $\T$ the circle group $\R/\Z$ which we will identify with the unit interval $[0,1)$.
For a real number $x$ denote by $\{x\}$ its fractional part, by $[x]=x-\{x\}$ its integer part and by $\|x\|$ its distance to the nearest integer.
Given $x,y\in\T$ with $\|x-y\|<\frac{1}{2}$, $[x,y]$ will denote the shortest interval in $\T$ connecting $x$ and $y$.
Lebesgue measure on $\T$ will be denoted by $m_\T$.



\subsection{Special flows}

Let $T$ be an ergodic automorphism of a standard Borel space $(X,\cB,\mu)$ (with $\mu(X)<+\infty$).
A measurable function $f\colon X\to \R$ defines a cocycle $\Z\times X \to \R$ given by
$$
f^{(n)}(x) =
\begin{cases}
f(x)+f(Tx)+\cdots+f(T^{n-1}x) &\mbox{if}\quad n>0 \\
\hfil 0 &\mbox{if}\quad n=0 \\
\hfil -(f(T^{n}x)+\cdots+f(T^{-1}x)) &\mbox{if}\quad n<0.
\end{cases}
$$
Assume that $f\in L^1(X,\cB,\mu)$ is a strictly positive function.

\begin{df}\label{specialflow}
The \emph{special flow} $\cT = (T^f_t)_{t\in\R}$ over the \emph{base automorphism} $T$ under the \emph{roof function} $f$ is the flow acting on $(X^f,\cB^f,\mu^f)$, where $X^f = \{ (x,s) \in X\times\R \mid 0 \leq s < f(x) \}$ and $\cB^f$ and $\mu^f$ are the restrictions of $\cB \otimes \cB_\R$ and $\mu \otimes \lambda$ to $X^f$ respectively ($\lambda$ stands for Lebesgue measure on $\R$).
Under the action of the flow $\cT$ each point in $X^f$ moves vertically upward with unit speed, and we identify the point $(x,f(x))$ with $(Tx,0)$.
More precisely, for $(x,s)\in X^f$ we have
\begin{equation}\label{spflow}
T^f_t(x,s) = (T^n x, s+t-f^{(n)}(x)),
\end{equation}
where $n\in\Z$ is the unique number such that $f^{(n)}(x) \leq s+t < f^{(n+1)}(x)$.
\end{df}
\noindent
We will identify the base $X$ with the subset $\{ (x,0) \mid x\in X\}\subset X^f$.
Notice that if $d$ is a 
 metric on $X$, then $d^f((x,t),(y,s)) = d(x,y) + |t-s|$ is a metric for $X^f$.

\subsection{Von Neumann flows}

We call a function $f\colon \T \to \R$ \emph{piecewise $C^1$} if there exist $\beta_1,\ldots,\beta_k\in\T$ such that $f|_{\T\setminus \{\beta_1,\ldots,\beta_k\}}$ is $C^1$ smooth and $f_\pm(\beta)=\lim_{x\to \beta\pm} f(x)$ is assumed to be finite.
Denote $d_i:=f_-(\beta_i)-f_+(\beta_i)$ the \emph{jump} of $f$ at point $\beta_i$.
The number $\sum_{i=1}^k d_i$ is the \emph{sum of jumps} of $f$.

\begin{df}
A \emph{von Neumann flow} is a special flow $\cT$ over a rotation $R_\alpha\colon(\T,m_\T)\to(\T,m_\T)$ by an irrational $\alpha\in\T$ and under a piecewise $C^1$ roof function $f\colon \T\to\R_+$ with a non-zero sum of jumps.
\end{df}

We will consider the simplest case when $f$ has only one discontinuity.
Without loss of generality we may assume that $f$ is $C^1$ on $\T\setminus \{0\}$ with a jump $A=f_-(0)-f_+(0)\neq 0$.
Any such $f$ can be written in the form
$$
f(x)=g(x)+A\{x\}+c,
$$
where $g\in C^1(\T)$, $\int_\T g(x)dx=0$, $A\neq 0$ ($A$ is called the \emph{slope}) and $c\in\R$ is such that $f>0$.

We will assume that $\int_\T f dm_\T=1$, that is we normalize the resulting measure to make it a probability measure.

\subsection{Finite rank systems and systems of local rank one}

In this section we recall the notion of finite rank and local rank one.
There are several equivalent ways to define a finite rank system (see \cite{Fe97}).
We will define rank properties in the language of special flows, \cite{Fa05}.

Let $\cT=(T_t)_{t\in\R}$ be an ergodic flow on a standard probability space $(X,\cB,\mu)$.
Let $B\subset X$, $H\in\R_+$ and $0<\eta < 1$.
\begin{df}\label{tower}
A pair $(B,H)$ is an \emph{$\eta$-tower} (or simply a \emph{tower}) for $\cT$ (with the \emph{base} $B$ and of \emph{height} $H$) if there exists an isomorphism $\varphi\colon (X,\mu) \to (Y^f,\nu^f)$ between the flow $\cT$ and a special flow over an ergodic $S\colon (Y,\cC,\nu) \to (Y,\cC,\nu)$ under a roof function $f\colon Y \to \R_+$ such that
\begin{itemize}
\item $\varphi(B) \subset Y$ and $\nu(\varphi(B))>\eta \nu(Y)$;
\item $f(y)\leq H$ for every $y\in Y$;	
\item $f(y)=H$ for every $y\in \varphi(B)$.
\end{itemize}
\end{df}
\noindent
By abuse of notation, we will identify the subset $\varphi^{-1}(Y)\subset X$ with the base $Y$ and the measure $\nu\circ \varphi^{-1}$ on $\varphi^{-1}(Y)$ with $\nu$, that is we will think of the base of the corresponding suspension flow as of a subset of $X$ equipped with a finite measure $\nu$.
Note that $\nu(Y)\geq\frac{1}{H}$ (since $1=\mu(X)=\nu^f(Y^f)\leq H\nu(Y)$).
The set $\bigsqcup_{t=0}^H T_t B$ will be also referred to as a \emph{tower}.
For $0\leq t<H$ the set $T_t B$ is called a {\em level} of the tower.




Fix a finite measurable partition $\cP$ of $X$.
For $\varepsilon > 0$ we say that a level $T_t B$ is {\em $\varepsilon$- monochromatic}
(for $\cP$) if a $1-\varepsilon$ proportion of it (with respect to the measure $\nu\circ T_{-t}$)
is contained in one atom of the partition $\cP$.
A tower for $(T_t)_{t\in\R}$ is called {\em $\varepsilon$-monochromatic} (for $\cP$) if a $1-\varepsilon$ proportion of its levels (with respect to the Lebesgue measure $\lambda$ on $[0,H)$) is $\varepsilon$-monochromatic.

\begin{df}\label{alpharankone}
Let $\beta\in(0,1]$.
An ergodic flow $\cT=(T_t)_{t\in\R}$ has \emph{local rank one of order $\beta$} if for every finite partition $\cP$ of $X$ and every $\varepsilon>0$ there exists a $(\beta-\varepsilon)$-tower for $\cT$ which is $\varepsilon$-monochromatic for $\cP$.
\end{df}

\begin{df}\label{localrankone}
An ergodic flow $\cT$ has {\em local rank one} if it has local rank one of some order $\beta\in(0,1]$.
If $\beta=1$ then $\cT$ is said to have {\em rank one}.
\end{df}

\begin{df}\label{finiterank}
An ergodic flow $\cT=(T_t)_{t\in\R}$ has \emph{finite rank} if there exists $r\in\N$ such that for every finite partition $\cP$ of $X$ and every $\varepsilon>0$ there are $r$ disjoint towers $(B_1,H_1),\ldots,(B_r,H_r)$ for $\cT$ which are $\varepsilon$-monochromatic for $\cP$ and such that $\mu(\bigsqcup_{i=1}^r \bigsqcup_{t=0}^H T_t B_i) > 1-\varepsilon$.
We get the definition of rank one flow if $r=1$.
\end{df}

\noindent
Clearly, all the above properties are measure theoretic invariants and the following implications hold:
$$
\mbox{rank one} \; \Rightarrow \; \mbox{finite rank} \; \Rightarrow \; \mbox{local rank one} \; \stackrel{\scriptsize\cite{Ki}}{\Rightarrow} \; \mbox{finite spectral multiplicity}.
$$

Let $\cP\colon X \to \{1,\ldots,k\}$ be a finite partition of $X$.
Given $H>0$ we define the \emph{Hamming distance} between $x$ and $y\in X$ by
$$
d^\cP_H(x,y) := \frac{1}{H} \lambda(\{ 0\leq t <H \mid \cP(T_tx) \neq \cP(T_ty) \}).
$$

\begin{lm}\label{hamlem1}
For an arbitrary $\varepsilon>0$, if $(B,H)$ is an $\frac{\varepsilon^2}{4}$-monochromatic tower for a flow $(T_t)_{t\in\R}$, then there exists $D\subset B$, $\nu(D)>(1-\varepsilon)\nu(B)$, such that $d^\cP_{H}(x,y)<\varepsilon$ for all $x,y\in D$.
\end{lm}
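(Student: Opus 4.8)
The plan is to unwind the definition of an $\varepsilon'$-monochromatic tower (with $\varepsilon'=\varepsilon^2/4$) into a statement about which levels are dominated by a single atom, and then to trade that information across levels by a Fubini argument followed by Markov's inequality. Set $\varepsilon'=\varepsilon^2/4$ and assume $0<\varepsilon\le 1$ (if $\varepsilon>1$ the conclusion is trivial, since $d^\cP_H\le 1$, and we may take $D=B$). By definition there is a set of good times $G\subseteq[0,H)$ with $\lambda([0,H)\setminus G)\le\varepsilon' H$ such that for every $t\in G$ some atom $P_t\in\cP$ dominates the level $T_tB$; that is, writing $B_t:=\{x\in B:\cP(T_tx)=P_t\}$, one has $\nu(B\setminus B_t)\le\varepsilon'\nu(B)$ (here we use that the measure $\nu\circ T_{-t}$ on the level $T_tB$ corresponds to $\nu$ on $B$). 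Since $\cP$ has finitely many atoms, $P_t$ may be chosen to depend measurably on $t$, so the defect set $\{(x,t):x\in B,\ t\in G,\ \cP(T_tx)\ne P_t\}$ is measurable. The key observation is that if $t\in G$ and $x,y\in B_t$, then $\cP(T_tx)=P_t=\cP(T_ty)$; hence for any $x,y\in B$,
\[
\{t\in[0,H):\cP(T_tx)\ne\cP(T_ty)\}\subseteq([0,H)\setminus G)\cup\{t\in G:x\notin B_t\}\cup\{t\in G:y\notin B_t\}.
\]

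Next I would introduce the pointwise defect function $F(x):=\tfrac1H\,\lambda\big(\{t\in G:x\notin B_t\}\big)$ for $x\in B$, and bound its $\nu$-average by Fubini:
\[
\int_B F\,d\nu=\frac1H\int_G\nu(B\setminus B_t)\,d\lambda(t)\le\frac1H\,\lambda(G)\,\varepsilon'\nu(B)\le\varepsilon'\nu(B)=\frac{\varepsilon^2}{4}\,\nu(B).
\]
Dividing the displayed containment by $H$ and using the bound on the bad times gives, for all $x,y\in B$,
\[
d^\cP_H(x,y)\le\frac{\lambda([0,H)\setminus G)}{H}+F(x)+F(y)\le\varepsilon'+F(x)+F(y).
\]

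Finally I would apply Markov's inequality to the average bound on $F$. Setting $D:=\{x\in B:F(x)<\varepsilon/3\}$, we get $\nu(B\setminus D)\le\tfrac{3}{\varepsilon}\int_B F\,d\nu\le\tfrac{3}{\varepsilon}\cdot\tfrac{\varepsilon^2}{4}\nu(B)=\tfrac{3\varepsilon}{4}\nu(B)<\varepsilon\,\nu(B)$, so that $\nu(D)>(1-\varepsilon)\nu(B)$, as required. For $x,y\in D$ the previous estimate yields $d^\cP_H(x,y)\le\varepsilon'+F(x)+F(y)<\tfrac{\varepsilon^2}{4}+\tfrac{2\varepsilon}{3}\le\tfrac{\varepsilon}{4}+\tfrac{2\varepsilon}{3}=\tfrac{11\varepsilon}{12}<\varepsilon$, where we used $\varepsilon\le 1$. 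This proves the lemma.

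There is no deep obstacle here; the argument is essentially bookkeeping with Fubini and Markov. The only genuinely technical point is the measurable dependence of the dominant atom $t\mapsto P_t$ (and hence of the defect set), which is harmless precisely because $\cP$ is a finite partition, so one can select $P_t$ as, say, the lowest-indexed maximizer of $t\mapsto\nu(\{x\in B:\cP(T_tx)=j\})$. The one choice that must be made with care is the threshold $\varepsilon/3$ together with the constant $\varepsilon'=\varepsilon^2/4$: this constant is exactly what is needed so that the two defect terms $F(x),F(y)$ and the bad-times term combine to a total strictly below $\varepsilon$ while still leaving $\nu(D)$ strictly above $(1-\varepsilon)\nu(B)$.
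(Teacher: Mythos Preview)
Your proof is correct and follows essentially the same Fubini--Markov argument as the paper: define the dominant atom at each level, bound the average time a point spends outside it, and use Markov's inequality to extract a large set $D$. The only cosmetic differences are that the paper defines the largest atom $Q_t$ for \emph{all} $t$ (so the bad-times contribution is absorbed into the double integral rather than singled out as your $\varepsilon'$ term) and uses the threshold $\varepsilon/2$ in place of your $\varepsilon/3$.
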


\begin{proof}
For every $0\leq t<H$, the partition $\cP$ induces a partition $\cQ_t$ of $B$ by $\cQ_t(x)=\cP(T_t x)$.
Denote by $Q_t$ the atom of the partition $\cQ_t$ with the largest $\nu$-measure.
If for some $t\in [0,H)$, $Q_t$ is not uniquely defined, take $Q_t$ to be any atom of $\cQ_t$ for which $\nu$ obtains its maximum.
However, for $1-\frac{\varepsilon^2}{4}$ proportion of $t\in [0,H)$ (with respect to the Lebesgue measure) $Q_t$ is determined uniquely, since the tower is $\frac{\varepsilon^2}{4}$-monochromatic.
Denote by $P_t$ the atom of $\cP$ such that $Q_t=B\cap T_{-t}(P_t)$.
Note that
$$
\frac{1}{H}\int_0^{H} \left( \int_B \1_{B\setminus Q_t}(x) d\nu(x) \right) dt < \frac{\varepsilon^2}{2},
$$
since the tower is $\frac{\varepsilon^2}{4}$-monochromatic.
By Fubini's Theorem, we get
$$
\frac{1}{H}\int_B \left( \int_0^{H} \1_{B\setminus Q_t}(x) dt \right) d\nu(x) < \frac{\varepsilon^2}{2}.
$$
Hence there exists a set $D\subset B$, $\nu(D) > (1-\varepsilon)\nu(B)$, such that for all $x\in D$,
$$
\frac{1}{H} \int_0^{H} \1_{B\setminus Q_t}(x) dt < \frac{\varepsilon}{2}.
$$
This means that for every $x\in D$, $T_tx\in P_t$ for $1-\frac{\varepsilon}{2}$ proportion of $t\in[0,H)$.
So, for any $x,y\in D$ and for $1-\varepsilon$ proportion of $t\in [0,H)$ both $T_tx,T_ty\in P_t$, and therefore $d^\cP_{H}(x,y)<\varepsilon$.
\end{proof}

Given a set $P$ in a metric space $(X,d)$, by its \emph{diameter} we mean $\diam P := \sup_{x,y\in P}d(x,y)$.
For a family of sets $\cP$, $\diam \cP := \sup_{P\in\cP}\diam P$.
As we will see in the next section, von Neumann flows in consideration have a Ratner like property of slow divergence of nearby points (Proposition~\ref{mainprop}), which will be in contrast with the following property of local rank one special flows.

\begin{lm} \label{lem2}
Let $\cT=(T_t)_{t\in\R}$ be a special flow over a circle rotation and let $\cP$ be a finite partition of $X$.
If $\cT$ has local rank one of order $\beta\in(0,1]$, then for any $\varepsilon > 0$ there exist $H=H(\varepsilon)>0$ arbitrary large and $(x_0,s_0),(y_0,s'_0)\in X$ such that
$$\|x_0-y_0\| > \frac{\beta}{10H}, \;
d^f((x_0,s_0),(y_0,s'_0)) \leq \diam \cP, \;
d^\cP_H((x_0,s_0),(y_0,s'_0))<\varepsilon,$$
and for $(x_1,s_1):=T_H(x_0,s_0)$, $(y_1,s'_1):=T_H(y_0,s'_0)$ we also have
$$\|x_1-y_1\| > \frac{\beta}{10H}, \quad
d^f((x_1,s_1),(y_1,s'_1)) \leq \diam \cP.$$
\end{lm}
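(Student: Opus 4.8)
The plan is to manufacture the required pair inside a tall, highly monochromatic tower, and to use the rotation structure of the base to convert the largeness of the tower into a lower bound on how far nearby orbits must spread out in the circle coordinate. First I would invoke local rank one of order $\beta$: applied to $\cP$ with the parameter $\frac{\varepsilon^2}{4}$ in place of $\varepsilon$, it yields a $(\beta-\varepsilon)$-tower $(B,H)$ with $H$ as large as we wish that is $\frac{\varepsilon^2}{4}$-monochromatic. Since all but a $\frac{\varepsilon^2}{4}$-proportion of the levels are monochromatic, I may select two monochromatic levels at heights $t_0\approx 0$ and $t_1\approx H$ and pass to the sub-tower based at level $t_0$ of height $H':=t_1-t_0\approx H$; after relabelling call it $(B,H)$ again. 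Its base and its top level are then each $\varepsilon$-monochromatic, with dominant atoms $P_0$ and $P_1$. Lemma~\ref{hamlem1} now gives $D\subset B$ with $\nu(D)>(1-\varepsilon)\nu(B)$ on which every pair is $d^\cP_H$-close; intersecting $D$ with $P_0$ and with $T_{-H}(P_1)$ costs only an $O(\varepsilon)$ fraction of measure, so the resulting $\hat D$ still has $\nu(\hat D)\gtrsim\beta/H$ (recall $\nu(B)>(\beta-\varepsilon)/H$). By construction any two points of $\hat D$ lie in the single atom $P_0$ and their $T_H$-images lie in $P_1$, which secures $d^f\leq\diam\cP$ at times $0$ and $H$ and the bound $d^\cP_H<\varepsilon$, reducing the lemma to finding one pair in $\hat D$ whose circle coordinates are more than $\beta/(10H)$ apart at both $t=0$ and $t=H$.

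The key estimate is a spreading bound. Write $\pi(x,s)=x$ for the projection to the base circle. I claim $\pi(\hat D)$ cannot lie in a short arc. Indeed, in time $H$ an orbit crosses the roof $N\approx H$ times: since $\int_\T f\,dm_\T=1$ and $R_\alpha$ is uniquely ergodic, $f^{(N)}(x)=N(1+o(1))$ uniformly in $x$, so $N\approx H$. If $\pi(\hat D)\subset I$ with $I$ an arc of length $\ell$, then the genuine tower $\mathcal O:=\bigsqcup_{0\le t<H}T_t\hat D$ projects into $\bigcup_{n=0}^{N}R_\alpha^n I$, so $\mu(\mathcal O)\le\int_{\bigcup_n R_\alpha^n I}f\,dm_\T\le\int_I f^{(N+1)}\,dm_\T=(N+1)(1+o(1))\ell\approx H\ell$. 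Since $\mu(\mathcal O)=H\nu(\hat D)\gtrsim\beta$, this forces $\ell\gtrsim\beta/H$. Hence for $\varepsilon$ small and $H$ large, $\pi(\hat D)$ is not contained in any arc of length $\beta/(2H)$; running the same computation with the time-reversed flow issuing from $T_H\hat D$ (whose backward tower is again $\mathcal O$) shows the same for $\pi(T_H\hat D)$.

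It then remains to produce a single pair separated at both ends, which I would do by a dichotomy. Set $\delta=\beta/(10H)$ and suppose every pair $z,w\in\hat D$ has $\|\pi z-\pi w\|\le\delta$ or $\|\pi T_Hz-\pi T_Hw\|\le\delta$. The spreading bound gives $z_0,w_0\in\hat D$ with $\|\pi z_0-\pi w_0\|>2\delta$, so by assumption $\|\pi T_Hz_0-\pi T_Hw_0\|\le\delta$. For any $R\in\hat D$ the $\delta$-arcs about $\pi z_0$ and $\pi w_0$ are disjoint, so $\pi R$ is more than $\delta$ from at least one of them; the assumption then places $\pi T_HR$ within $\delta$ of $\pi T_Hz_0$ or of $\pi T_Hw_0$, hence within $2\delta$ of $\pi T_Hz_0$. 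Thus $\pi(T_H\hat D)$ lies in an arc of length $4\delta<\beta/(2H)$, contradicting the spreading bound for $T_H\hat D$. Therefore some pair in $\hat D$ is separated by more than $\beta/(10H)$ at both $t=0$ and $t=H$, and this pair satisfies all six requirements.

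The step I expect to be the main obstacle is reconciling the two $d^f\leq\diam\cP$ constraints with the clean constant $10$: a naive pigeonhole over the $|\cP|^2$ possible pairs of endpoint atoms would shrink the working set by a factor $|\cP|^2$ and push the spreading threshold $\approx\nu(\hat D)$ below $\beta/(10H)$. The device of choosing monochromatic base and top levels—so that $\hat D$ keeps nearly full measure while lying in a single atom at each endpoint—is precisely what keeps the threshold at scale $\beta/H$, leaving the factor-$10$ slack to absorb all the $o(1)$ and $(1-\varepsilon)$ losses; checking that such levels can be chosen simultaneously and that the sub-tower retains height $\approx H$ is the part that requires care.
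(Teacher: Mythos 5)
Your proposal is correct in substance and follows the paper's architecture for most of its length: a tall $(\beta-\varepsilon)$-tower that is $O(\varepsilon^2)$-monochromatic, two monochromatic levels near the bottom and the top, Lemma~\ref{hamlem1}, and the set $\hat D$ (the paper's $E$) of transversal measure $\gtrsim \beta/H$ lying in a single atom of $\cP$ at both ends. Where you genuinely diverge is the final separation step. The paper fixes one point $(x,s)\in E$ and notes that for each $i=0,1$ the set of $(y,s')\in E$ with $\|x_i-y_i\|\le \beta/(10H)$ has $\nu$-measure at most $\beta/(5H)$, because the transversal measure on a level of the tower projects under $\pi$ to a measure dominated by $m_\T$; since $\nu(E)>\beta/(2.5H)$, a point outside both bad sets exists immediately. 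You instead establish a ``spreading bound'' (neither $\pi(\hat D)$ nor $\pi(T_H\hat D)$ fits in an arc of length $\beta/(2H)$) via the volume computation $H\nu(\hat D)=\mu(\mathcal{O})\le \int_I f^{(N+1)}\,dm_\T\approx H\ell$, and then run a dichotomy to extract a pair separated at both ends. Both mechanisms quantify the same phenomenon and both are sound; yours is self-contained (it does not rely on the cross-section projection fact, which the paper only asserts parenthetically), at the price of needing $f^{(N)}/N\to\int_\T f\,dm_\T$ uniformly and $N=\sup_{z\in\hat D}n(z)=H(1+o(1))$. These are automatic for the von Neumann roofs to which the lemma is applied ($0<L\le f\le M$ and $f$ Riemann integrable), but they are extra hypotheses relative to the lemma's stated generality (an arbitrary positive $L^1$ roof), where one would have to trim $\hat D$ by an Egorov-type argument; note also that the crude bound $f^{(N+1)}\le (N+1)M$ would lose a factor $M/L$ and not suffice, so the appeal to unique ergodicity is genuinely needed. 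Your dichotomy is more roundabout than the paper's direct two-bad-sets count, and the bookkeeping you flag (simultaneous choice of levels, the factor $h/H'$ in the Hamming estimate, $\varepsilon^2/4$ versus $\varepsilon^2/16$) is routine and causes no difficulty.
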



\begin{proof}
We assume that $\varepsilon<\frac{\beta}{20}$.
Since $\cT$ has local rank one of order $\beta$, we can find an arbitrary high $(\beta-\varepsilon)$-tower $(B,h)$ which is $\frac{\varepsilon^2}{16}$-monochromatic for $\cP$, $\nu(B)>(\beta-\varepsilon)\nu(Y)\geq \frac{\beta-\varepsilon}{h}$ (recall that $\nu(Y)\geq \frac{1}{h}$).
Fix $t_0\in[0, h/4)$ and $t_1\in[3h/4, h)$ for which the levels $T_{t_0} B$ and $T_{t_1} B$ are $\varepsilon$-monochromatic, i.e.\ $\nu(T_{-t_i}P_i\cap B)\geq (1-\varepsilon)\nu(B)$ for an atom $P_i$ of $\cP$ ($i=0,1$).
We set $H:=t_1-t_0 > \frac{h}{2}$.

By Lemma~\ref{hamlem1} there exists a subset $D\subset B$ with $\nu(D)>(1-\varepsilon)\nu(B)$ and such that $d^\cP_h((x,s),(y,s'))<\frac{\varepsilon}{2}$ for any $(x,s),(y,s')\in D$.
Let $E:=D \cap T_{-t_0}P_0 \cap T_{-t_1}P_1$, $\nu(E)>(1-3\varepsilon)\nu(B) > \frac{\beta-4\varepsilon}{h} > \frac{\beta}{2.5H}$.
Fix some $(x,s)\in E$ and set
$$(x_0,s_0):=T_{t_0}(x,s), \quad (x_1,s_1):=T_{t_1}(x,s) = T_H(x_0,s_0).$$
Similarly, for $(y,s')\in E$ denote $$(y_0,s'_0):=T_{t_0}(y,s'), \quad (y_1,s'_1):=T_{t_1}(y,s') = T_H(y_0,s'_0).$$
Consider two sets
\begin{align*}
C_0 &:=\{ (y,s')\in E \mid \|x_0-y_0\|\leq \tfrac{\beta}{10H} \} \quad\mbox{and} \\
C_1 &:=\{ (y,s')\in E \mid \|x_1-y_1\|\leq \tfrac{\beta}{10H} \}.
\end{align*}
It is clear that $\nu(C_0)=\nu(C_1)= \frac{\beta}{5H}$ (since $\nu|_B$ projects onto $m_\T|_{\pi(B)}$, where $\pi$ stands for the natural projection $X\to\T$), while $\nu(E)>\frac{\beta}{2.5H}$.
Therefore there exists $(y,s') \in E \setminus (C_0 \cup C_1)$.
The corresponding pair $(x_0,s_0)$ and $(y_0,s'_0)=T_{t_0}(y,s')$ satisfies the lemma.
Indeed, $\|x_0-y_0\|, \|x_1-y_1\| > \frac{\beta}{10H}$, because $(y,s') \notin C_0 \cup C_1$.
On the other hand,
$$
d^f((x_i,s_i),(y_i,s'_i)) \leq \diam \cP,
$$
because $(x_i,s_i),(y_i,s'_i)\in P_i$, $i=0,1$.
And finally,
\begin{align*}
d^\cP_H((x_0,s_0),(y_0,s'_0))
&= H^{-1} \lambda(\{ t_0\leq t <t_1 \mid \cP(T_t(x,s)) \neq \cP(T_t(y,s')) \}) \\
&\leq H^{-1} \lambda(\{ 0\leq t <h \mid \cP(T_t(x,s)) \neq \cP(T_t(y,s')) \}) \\
&< 2h^{-1} \lambda(\{ 0\leq t <h \mid \cP(T_t(x,s)) \neq \cP(T_t(y,s')) \}) \\
&= 2d^\cP_h((x,s),(y,s')) < \varepsilon,
\end{align*}
since $H>\frac{h}{2}$ and $(x,s),(y,s')\in D$.
The lemma is proved.
\end{proof}

\section{Proof of Theorem~\ref{mainth}}
Assume that an irrational $\alpha$ is fixed and $f$ is of the form \eqref{roof}.
We may assume without lost of generality that the slope in \eqref{roof} is $A=1$.
Denote $L := \min\{1,\inf_{x\in\T} f(x)\}$ and $M := \max\{1,\sup_{x\in\T} f(x)\}$, so that
$$
0<L\leq f(x) \leq M < \infty
$$
and $L\leq 1 \leq M$.
For an integer interval $J=[a,b]\subset \Z$ and $c>0$, by $cJ$ we denote the integer interval $[ca,cb]\cap\Z$.
The following proposition will be the main ingredient in the proof.

\begin{pr} \label{mainprop}
Let $\beta\in (0,1]$ be a fixed number.
There exists a constant $\delta_0>0$ such that for all $x,y\in \T$, $0<\|x-y\|<\delta_0$, there exists an interval $J_{x,y}\subset \Z$ such that the following  holds:
\begin{enumerate}[(i)]
\item \label{prop1a} $0\in J_{x,y}\subset [-\frac{\beta}{100M\|x-y\|},
\frac{\beta}{100M\|x-y\|}]$;
\item \label{prop1b} for every $n\in \frac{10M}{L} J_{x,y}$,   we have $|f^{(n)}(x)-f^{(n)}(y)|<\frac{50M}{L}$;
\item \label{prop1c} there exists an interval $U_{x,y}\subset J_{x,y}$ such that $|U_{x,y}|>\frac{1}{10}|J_{x,y}|$ and such that for every $n\in U_{x,y}$
$$
|f^{(n)}(x)-f^{(n)}(y)|>\frac{\beta L}{10^6M^2}.
$$
\end{enumerate}
\end{pr}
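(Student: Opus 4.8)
The plan is to track the Birkhoff--sum difference $D(n):=f^{(n)}(x)-f^{(n)}(y)$ directly, where I write $\delta:=\|x-y\|$ and take the slope $A=1$. Decomposing $f=g+\{\cdot\}+c$, on any block of consecutive times $k$ for which the short arc $[x+k\alpha,y+k\alpha]$ avoids the discontinuity $0$, the roof is $C^1$ there with $f'=1+g'$, so integrating over these arcs gives $D(n)=\pm n\delta+\bigl(g^{(n)}(y)-g^{(n)}(x)\bigr)$ over such a block. The term $\pm n\delta$ is the \emph{uniform (parabolic) divergence} that drives the whole estimate; a jump of size $\pm1$ (always of the \emph{same} sign, since the pair keeps a fixed orientation) is added precisely at the times $k$ with $\{x+k\alpha\}\in(1-\delta,1)$, i.e.\ when the arc crosses $0$. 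Thus everything reduces to two tasks: showing the $g$-contribution is negligible, and locating the crossing times.

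For the $g$-term I would use that $g\in C^1(\T)$ has zero mean, hence so does $g'\in C(\T)$; by unique ergodicity of $R_\alpha$, $\sup_x|(g')^{(n)}(x)|=o(n)$. Since $g(y+k\alpha)-g(x+k\alpha)=\delta\,g'(x+k\alpha)+\delta\cdot O(\omega_{g'}(\delta))$ by the mean value theorem and the modulus of continuity $\omega_{g'}$, summation gives $|g^{(n)}(y)-g^{(n)}(x)|=o(n\delta)$ uniformly in $x,y$ and in $|n|$ up to order $1/\delta$. This is exactly the step that fixes $\delta_0$ (depending on $g,\alpha,\beta$): for $\delta<\delta_0$ and $|n|$ in the relevant range the $g$-part is at most $\tfrac14|n|\delta$ where $|n|$ is large, while for the finitely many small $|n|$ it is at most $|n|\,\|g'\|_\infty\,\delta\le1$ once $\delta_0<1/(n_0\|g'\|_\infty)$. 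In particular the $g$-part is harmless for both the upper bound in \eqref{prop1b} and the lower bound in \eqref{prop1c}. (I expect this is the content of the Lemma~\ref{lin} cited in the introduction.)

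Locating crossings is where the arithmetic of $\alpha$ enters. The crossing times are the visits of the orbit to the one-sided $\delta$-neighbourhood of $0$, so by the three-gap theorem their gaps take at most three values, the mean gap being $1/\delta$. Since $R:=\frac{\beta}{100M\delta}\le\frac1{100\delta}$ is far below this mean gap, I would set up a dichotomy based on the nearest crossings to $0$. If no crossing lies within $R$ of $0$ on, say, the forward side, I take $J_{x,y}$ to be that one-sided clean interval (``avoiding the discontinuity long enough''): then $D(n)=\pm n\delta+g\text{-part}$ with $|D(n)|\ge\tfrac34|n|\delta$, and choosing $U_{x,y}$ to be the outer part where $|n|\ge|J|/10$ yields $|U_{x,y}|\ge\tfrac1{10}|J_{x,y}|$ and $|D(n)|\gtrsim R\delta=\frac{\beta}{100M}\gg\frac{\beta L}{10^6M^2}$, giving \eqref{prop1c}; \eqref{prop1a} is immediate. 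If instead a crossing occurs within $R$, I choose $J_{x,y}$ (again one-sided, and short, so \eqref{prop1a} holds) to capture one or two same-sign crossings; their jumps accumulate to $|D(n)|\ge\tfrac12$ on a definite fraction of $J_{x,y}$, again giving \eqref{prop1c} with room to spare. In both regimes \eqref{prop1b} reduces to bounding the \emph{number} of crossings inside the dilated window $\frac{10M}{L}J_{x,y}$ by $O(M/L)$, since each contributes a jump $\le1$ while the linear and $g$ contributions there are $\le\frac{\beta}{10L}+2$, comfortably below $\frac{50M}{L}$.

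The hard part will be this uniform control of the crossing count for arbitrary irrational $\alpha$. When $\alpha$ has large partial quotients the crossings cluster densely (the least return time to the $\delta$-interval can be $\ll1/\delta$), and one must simultaneously guarantee a window $J_{x,y}$ favorable for \eqref{prop1c} and keep the crossing count inside $\frac{10M}{L}J_{x,y}$ small enough for \eqref{prop1b}. I would bound that count by applying the Denjoy--Koksma inequality to a suitable smoothing of $\1_{(1-\delta,1)}-\delta$ at a well-chosen denominator scale $q_m$ comparable to the window, or equivalently by the three-gap count of returns. Reconciling the length/placement requirement for the lower bound with the crossing-count requirement for the upper bound, by the correct choice of scale and of time direction (forward versus backward), uniformly over all $x,y$ and all $\alpha$, is the crux; the zero-mean cancellation of $g$ and the same-sign nature of the jumps are the two structural facts that make it go through.
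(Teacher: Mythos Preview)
Your plan is the paper's: decompose $f^{(n)}(x)-f^{(n)}(y)$ into the linear drift $\pm n\delta$, the $g$-part (killed by unique ergodicity applied to $g'$; this is Lemma~\ref{c1}), and $\pm1$ jumps at discontinuity crossings, then run a forward/backward dichotomy on whether a crossing occurs early. One misidentification: Lemma~\ref{lin} is \emph{not} the $g$-estimate but the crossing trichotomy itself.

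The point you correctly flag as ``the hard part'' is also where your sketch is imprecise. In the no-crossing branch you take $J_{x,y}=[0,R]$ with $R=\tfrac{\beta}{100M\delta}$, but then the dilated window $\tfrac{10M}{L}J_{x,y}$ extends beyond $R$ and may contain many crossings, so \eqref{prop1b} is not yet established. The paper resolves this by committing to the continued-fraction scale \emph{before} the dichotomy: pick the unique $q_n$ with $\tfrac{\beta}{200Mq_{n+1}}<\delta\le\tfrac{\beta}{200Mq_n}$; then Lemma~\ref{lin} says that either (a) a crossing occurs in $[0,q_n)$, or no crossing occurs on (b) $[0,q_{n+1}/6]$ or (c) $[-q_{n+1}/6,0]$. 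In (b),(c) one takes $|J_{x,y}|=\tfrac{\beta L}{10^5M^2\delta}$, short enough that even the dilated window lies inside the clean stretch $[0,q_{n+1}/6]$, giving \emph{zero} crossings there. In (a) one takes $J_{x,y}=[0,2q_n-1]$; since any $q_n$ consecutive orbit points are $\tfrac{1}{2q_n}$-separated and $\delta<\tfrac{1}{2q_n}$, each block of $q_n$ iterates contains at most one crossing, bounding the count in the dilated window $[0,\tfrac{20M}{L}q_n]$ by $\tfrac{40M}{L}$. This scale-matched trichotomy is exactly the reconciliation you anticipate in your last paragraph; three-gap or Denjoy--Koksma would lead to the same place once the scale $q_n$ is fixed, but without that commitment the crossing bound in the dilated window is not uniform in $\alpha$.
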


We will prove the proposition in the next section, now let us prove the main result.

\begin{proof}[Proof of Theorem~\ref{mainth}]
Assume, by contradiction, that $\cT$ has local rank one of order $\beta\in(0,1]$.
Fix a positive
$$
\delta < \min\Bigl\{ \delta_0,\;
\frac12 \frac{\beta L}{10^6M^2},\;
\min_{\substack{k\in\Z\\ 0<k\leq \frac{60M}{L^2}}} \frac{\|k\alpha\|}{2} \Bigr\}, 
$$
where $\delta_0$ is as in Proposition~\ref{mainprop}.
Let $\cP$ be a finite partition of $X$ of diameter less then $\delta$.
For an arbitrary $\varepsilon > 0$ we can find by Lemma~\ref{lem2} a positive $H>100M$
and a pair of points $(x_0,s_0),(y_0,s'_0)\in D$ such that, if we denote by $(x_t,s_t):=T_t(x_0,s_0)$ and $(y_t,s'_t):=T_t(y_0,s'_0)$, we have
\begin{equation}\label{eq1}
\|x_0-y_0\| > \frac{\beta}{10H}, \quad \|x_H-y_H\| > \frac{\beta}{10H},
\end{equation}
\begin{equation}\label{eq2}
d^f((x_0,s_0),(y_0,s'_0))<\delta, \quad d^f((x_H,s_H),(y_H,s'_H))<\delta,
\end{equation}
\begin{equation}\label{hamineq}
d^\cP_{H}((x_0,s_0),(y_0,s'_0))<\varepsilon.
\end{equation}
Denote
\begin{align*}
C &:=\{ 0\leq t < H \mid d^f((x_t,s_t),(y_t,s'_t)) < \delta \} \quad\mbox{and} \\
F &:=\{ 0\leq t < H \mid d^f((x_t,s_t),(y_t,s'_t)) \geq \delta \}
\end{align*}
the times when the points $(x_t,s_t)$ and $(y_t,s'_t)$ are $\delta$-close and $\delta$-far respectively.
Notice that the mapping $\R\ni t\mapsto d((x_t,s_t),(y_t,s'_t))$ is piecewise constant by the definition of a special flow, whence both $C$ and $F$ are finite unions of (half-open) intervals.
It trivially follows from the definition of $F$ (since $\diam\cP<\delta$) that $\lambda(F)/H \leq d^\cP_{H}((x_0,s_0),(y_0,s'_0))$.
We will show at the end that $\lambda(F)>\varepsilon H$ if $\varepsilon$ is small to get a contradiction with (\ref{hamineq}).
For the reader's convenience, we split the proof into several steps.
Given $t\in C$, let $J_{x_t,y_t}$ be as in Proposition~\ref{mainprop}.

\smallskip

\textit{Claim~1.}
\textit{For every $t\in C$, $|J_{x_t,y_t}|\leq \frac{H}{5M}$.}

To prove the claim we will argue by contradiction.
Assume that there exists $t_0\in C$ such that $|J_{x_{t_0},y_{t_0}}| > \frac{H}{5M}$.
Then we derive from Proposition~\ref{mainprop}\eqref{prop1a} that
\begin{equation}\label{spk}
\|x_{t_0}-y_{t_0}\| < \frac{\beta}{10H}.
\end{equation}
Set $J:=\frac{10M}{L} J_{x_{t_0},y_{t_0}}$, so that $|J|>\frac{2H}L$ and for every $n\in J$
\begin{equation}\label{pb}
|f^{(n)}(x_{t_0})-f^{(n)}(y_{t_0})|< \frac{50M}{L}
\end{equation}
(by Proposition~\ref{mainprop}\eqref{prop1b}).
Consider the interval $I\subset \R$ given by
$$I:=[t_0-s_{t_0}+f^{(a)}(x_{t_0}), t_0-s_{t_0}+f^{(b)}(x_{t_0}) ),$$
where $a=\min J$, $b=\max J+1$.
Then $\lambda(I)>2H$ and it follows that either $0 \in I$ or $H \in I$.
Since the proof in both cases goes along the same lines, let us assume without loss of generality that $H \in I$.
By definition,
\begin{align*}
&(x_H,s_H) = T_{H-t_0}(x_{t_0},s_{t_0}) = (x_{t_0}+n\alpha, s_{t_0}+(H-t_0)-f^{(n)}(x_{t_0})), \\
&(y_H,s'_H) = T_{H-t_0}(y_{t_0},s'_{t_0}) = (y_{t_0}+m\alpha, s'_{t_0}+(H-t_0)-f^{(m)}(y_{t_0})),
\end{align*}
where $n,m\in\Z$ are such that $f^{(n)}(x_{t_0}) \leq s_{t_0}+(H-t_0) < f^{(n+1)}(x_{t_0})$, $f^{(m)}(y_{t_0}) \leq s'_{t_0}+(H-t_0) < f^{(m+1)}(y_{t_0})$.
Notice that $n\in J$ by the definition of $I$.
The distance between $(x_H,s_H)$ and $(y_H,s'_H)$ is smaller than $\delta$ by \eqref{eq2} and equals
\begin{equation}\label{dist}
\|x_{t_0}-y_{t_0}+(n-m)\alpha\| + |s_{t_0}-s'_{t_0}-f^{(n)}(x_{t_0})+f^{(m)}(y_{t_0})| < \delta.
\end{equation}
Since $f<M$, $|s_{t_0}-s'_{t_0}|<M$.
It follows from \eqref{pb}, \eqref{dist} and the cocycle equality $f^{(m)}(y_{t_0}) = f^{(n)}(y_{t_0}) + f^{(m-n)}(y_{t_0}+n\alpha)$ that
$$|f^{(m-n)}(y_{t_0}+n\alpha)|<\frac{50M}{L}+M+1<\frac{60M}{L}.$$
On the other hand, $f(x)\geq L$ and therefore $|f^{(k)}(x)| \geq |k|L$ for any $x\in\T$, $k\in\Z$.
Hence $|n-m|<\frac{60M}{L^2}$.
Since the first term in (\ref{dist}) is less then $\delta$ and $\|x_{t_0}-y_{t_0}\|<\delta$ (because $t_0\in C$), we have $\|(n-m)\alpha\|<2\delta$ with $|n-m|<\frac{60M}{L^2}$.
By the choice of $\delta$ this is only possible if $n=m$.
We conclude that $x_H=x_{t_0}+n\alpha$, $y_H=y_{t_0}+n\alpha$ and 
$$
\|x_H-y_H\|=\|x_{t_0}-y_{t_0}\|\stackrel{\eqref{spk}}{<} \frac{\beta}{10H},
$$
which contradicts \eqref{eq1}.
Similarly, if $0\in I$ then one can show that $\|x_0-y_0\|=\|x_{t_0}-y_{t_0}\| < \frac{\beta}{10H}$.
Claim~1 is proved.

\smallskip

\textit{Claim~2.}
\textit{For any $t\in C\cap[\frac{H}{4},\frac{3H}{4}]$ there exists an interval $I_t\subset [0,H)$ containing $t$
such that $\lambda(I_t\cap F) > \gamma \lambda(I_t)$, where $\gamma = \frac{L}{10M}$.}

Fix $t\in C\cap[\frac{H}{4},\frac{3H}{4}]$.
Fix integer intervals $J_{x_t,y_t}$ and $U_{x_t,y_t}\subset \Z$ as in Proposition~\ref{mainprop}.
We set $I_t := [t-s_t+f^{(a)}(x_t), t-s_t+f^{(b)}(x_t) )$, where $a=\min J_{x_t,y_t}$, $b=\max J_{x_t,y_t}+1$.
Equivalently, by denoting by $t+I:=\{t+x \mid x\in I\}$ the sumset, we can write 
$I_t = t + \bigsqcup_{n\in J_{x_t,y_t}} I_t^n$, where $I_t^n := [-s_t+f^{(n)}(x_t),-s_t+f^{(n+1)}(x_t))$.
Since $\frac{H}{4}\leq t < \frac{3H}{4}$, $|J_{x_t,y_t}|\leq \frac{H}{5M}$ (by Claim~1) and $f\leq M$, we have $I_t\subset [0,H)$.
Also $t\in I_t$, because $0\in J_{x_t,y_t}$. 
For every $n\in J_{x_t,y_t}$ and $r\in I_t^n$ we have
\begin{align*}
(x_{t+r},s_{t+r})=T_{r}(x_t,s_t) &= (x_t+n\alpha, s_t+r-f^{(n)}(x_t)), \\
(y_{t+r},s'_{t+r})=T_{r}(y_t,s'_t) &= (y_t+m\alpha, s'_t+r-f^{(m)}(y_t)),
\end{align*}
where $f^{(m)}(y_t) \leq s'_t+r < f^{(m+1)}(y_t)$.
By estimating the distance between $(x_{t+r},s_{t+r})$ and $(y_{t+r},s'_{t+r})$ as in \eqref{dist} and
reasoning as in the proof of Claim~1 we can show that if $t+r\in C$, then nesessarily $m=n$.
Moreover, in this case $|s_t-s'_t|<\delta$ and $|s_{t+r}-s'_{t+r}|<\delta$, so that $|f^{(n)}(x_t)-f^{(n)}(y_t)|<2\delta< \frac{\beta L}{10^6M^2}$.
The latter inequality cannot hold if $n\in U_{x_t,y_t}$ (by Proposition~\ref{mainprop}\eqref{prop1c}).
In other words, if $n\in U_{x_t,y_t}$ then the corresponding interval $t+I_t^n$ is contained in $F$.
We can now estimate $\lambda(F\cap I_t) \geq L |U_{x_t,y_t}| > \frac{1}{10} L|J_{x_t,y_t}|$ by Proposition~\ref{mainprop}\eqref{prop1c}, while $\lambda(I_t) \leq M|J_{x_t,y_t}|$.
This proves Claim~2.

\smallskip

\textit{Final step.}
Let $F=\bigsqcup_{n} F_n$ where each $F_n$ is a (half-open) interval in $[0,H)$.
The family $\cI=\{I_t\}_{t\in C\cap[\frac{H}{4},\frac{3H}{4}]} \cup \{F_n\}_n$ covers $[\frac{H}{4},\frac{3H}{4}]$.
First, we can enlarge each interval by 1\% of its length to get open intervals and by compactness select a finite subfamily (of enlarged intervals) that covers $[\frac{H}{4},\frac{3H}{4}]$.
Then, by Vitali covering lemma, we can find a finite disjoint subfamily of $\cI$ of total measure at least $\frac{H}{7}$.
Since each interval in $\cI$ contains more than $\gamma$ proportion of $F$, $\lambda(F) > \frac{\gamma}{7}H$.
This means that inequality (\ref{hamineq}) fails if $\varepsilon < \frac{\gamma}{7}$.
The obtained contradiction finishes the proof of Theorem~\ref{mainth}.
\end{proof}

\subsection{Proof of Proposition~\ref{mainprop}}
Before we prove Proposition~\ref{mainprop} let us give an outline of how to prove \eqref{prop1b} and \eqref{prop1c}.
They are both a consequence of Lemma~\ref{lin}.
For $x,y\in \T$ satisfying \eqref{distanc}, if \eqref{lem3b} or \eqref{lem3c} in Lemma~\ref{lin} holds (that is forward or backward orbit of $[x,y]$ avoids the discontinuity long enough),
we define $J_{x,y}$ to be respectively $[0, \frac{C}{\|x-y\|}]\cap \Z$ or $[-\frac{C}{\|x-y\|},0]\cap \Z$, $C=C(\beta,M,L)$.
By the definition of $J_{x,y}$ and Lemma~\ref{c1}, for $n\in J_{x,y}$, $|f^{(n)}(x)-f^{(n)}(y)|=n(\text{ slope }+o(1))\|x-y\|$.
This means that the divergence is linear and hence \eqref{prop1b} and \eqref{prop1c} follow.
If $x,y\in \T$ satisfy \eqref{lem3a}, we define $J_{x,y}$ as $[0,2q_n-1]\cap \Z$.
In this case the divergence is given by hitting the discontinuity, but the number of times the orbit of $[x,y]$ hits $0$ is bounded by \eqref{dmleq}.
This gives \eqref{prop1b}.
And once hitting the discontinuity, the two points cannot come close for some time, which leads to \eqref{prop1c}.

The following lemma is classical.

\begin{lm}\label{c1} Let $g\in C^1(\T)$. Then
\begin{equation}\label{canc}
\lim_{|n|\to +\infty}\sup_{x,y\in\T, x\neq y}\frac{|g^{(n)}(x)-g^{(n)}(y)|}{|n|\|x-y\|}=0.
\end{equation}
\end{lm}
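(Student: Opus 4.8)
The plan is to reduce the statement to the mean value theorem applied to the smooth function $g$, exploiting that the Birkhoff sum $g^{(n)}$ of a $C^1$ function is itself $C^1$ with a derivative whose sup norm grows at most linearly in $|n|$. First I would recall that for $x,y\in\T$ with $\|x-y\|<\frac12$, the interval $[x,y]$ is well defined, and that $g^{(n)}$ restricted to the base rotation is $g^{(n)}(x)=\sum_{j=0}^{n-1}g(x+j\alpha)$ for $n>0$ (with the analogous expression for $n<0$). Since each summand $g(\cdot+j\alpha)$ is $C^1$ with $(g(\cdot+j\alpha))'=g'(\cdot+j\alpha)$, the function $g^{(n)}$ is $C^1$ and its derivative satisfies $\|(g^{(n)})'\|_\infty\leq |n|\cdot\|g'\|_\infty$.

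The key step is then a direct estimate. By the mean value theorem there is a point $\xi$ on the interval $[x,y]$ with
\begin{equation*}
|g^{(n)}(x)-g^{(n)}(y)| = |(g^{(n)})'(\xi)|\cdot\|x-y\| \leq |n|\cdot\|g'\|_\infty\cdot\|x-y\|.
\end{equation*}
Dividing by $|n|\,\|x-y\|$ gives a uniform bound of $\|g'\|_\infty$ for every nonzero $n$, which unfortunately does not tend to $0$ on its own. So the crux is to extract genuine cancellation, and for this I would use uniform continuity of $g'$ together with equidistribution. The idea is that $(g^{(n)})'(\xi)=\sum_{j=0}^{n-1}g'(\xi+j\alpha)$ is a Birkhoff sum of the continuous function $g'$, which has zero mean since $\int_\T g'\,dm_\T=0$ (as $g$ is periodic). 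By the uniform (Weyl) ergodic theorem for the irrational rotation, $\frac1n\sum_{j=0}^{n-1}g'(z+j\alpha)\to 0$ uniformly in the base point $z\in\T$ as $|n|\to\infty$.

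The main obstacle is making the supremum over \emph{both} $x,y$ and the intermediate point $\xi$ rigorous: the mean value theorem produces a point $\xi$ depending on $x,y,n$, so I must upgrade the pointwise convergence of $\frac1n(g^{(n)})'$ to genuine uniform-in-basepoint convergence before taking the supremum over $x,y$. Concretely, I would write
\begin{equation*}
\sup_{x\neq y}\frac{|g^{(n)}(x)-g^{(n)}(y)|}{|n|\,\|x-y\|} \leq \frac1{|n|}\sup_{\xi\in\T}\Bigl|\sum_{j=0}^{n-1}g'(\xi+j\alpha)\Bigr| = \sup_{\xi\in\T}\Bigl|\tfrac1n\textstyle\sum_{j=0}^{n-1}g'(\xi+j\alpha)\Bigr|,
\end{equation*}
and then invoke the uniform convergence of Birkhoff averages of the continuous zero-mean function $g'$ under the uniquely ergodic rotation $R_\alpha$ to conclude that the right-hand side tends to $0$ as $|n|\to\infty$. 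The case $n<0$ is handled identically using the backward sum, so combining both gives the claim for $|n|\to+\infty$.
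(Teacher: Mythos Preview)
Your proposal is correct and follows essentially the same approach as the paper: apply the mean value theorem to write $\frac{|g^{(n)}(x)-g^{(n)}(y)|}{|n|\|x-y\|}=\bigl|\frac{1}{n}(g')^{(n)}(\xi)\bigr|$ for some $\xi\in\T$, and then use unique ergodicity of $R_\alpha$ together with $\int_\T g'\,dm_\T=0$ to conclude that $\frac{1}{n}(g')^{(n)}\to 0$ uniformly. The paper states this in two lines, whereas you spell out explicitly why the supremum over $x,y$ is harmless (since the convergence of Birkhoff averages is uniform in the base point), but the argument is the same.
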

\begin{proof} Notice that for some $\theta^{(n)}_{x,y}\in\T$
$$
\frac{|g^{(n)}(x)-g^{(n)}(y)|}{|n|\|x-y\|}=
\left|\frac{1}{n}g'^{(n)}(\theta^{(n)}_{x,y})\right|
$$
and so the statement follows from the ergodic theorem and unique ergodicity of $R_\alpha$ since $g'\in C(\T)$ and $\int_\T g'd m_\T=0$.
\end{proof}

Let $(q_n)_{n\in\N}$ denote the sequence of denominators of $\alpha$, that is
$$
\frac{1}{2 q_n q_{n+1}} < \left| \alpha - \frac{p_n}{q_n} \right| < \frac{1}{q_n q_{n+1}},
$$
where
\[
\begin{array}{lll}
q_0 = 1, & q_1 = a_1, & q_{n} = a_{n}q_{n-1} + q_{n-2},\\
p_0 = 0, & p_1 = 1, & p_{n} = a_{n}p_{n-1} + p_{n-2}
\end{array}
\]
and $[0;a_1,a_2,\ldots]$ stands for the continued fraction expansion of $\alpha$ (see \cite{Kh}).
It follows that
$$
\frac{1}{2 q_{n+1}} < \| q_n\alpha \| < \frac{1}{q_{n+1}}.
$$
The reader can easily check that the partition of $\T$ by $0,\alpha,...,(q_n-1)\alpha$ has the form
$\{ R_\alpha^k I_n \mid 0\leq k < q_n - q_{n-1} \} \cup \{ R_\alpha^k I'_n \mid 0\leq k < q_{n-1} \}$,
where $I_n = [0,q_{n-1}\alpha]$ and $I'_n = [(-q_{n-1}+q_n)\alpha,0]\subset\T$.
(We recall that for $x,y\in\T$, $[x,y]$ stands for the shortest interval in $\T$ connecting $x$ and $y$.)
In particular,
\begin{equation}\label{pdiam}
\min_{0 \leq i < j < q_n} \|i\alpha - j\alpha\| = \|q_{n-1}\alpha\| > \frac{1}{2q_n}.
\end{equation}

\begin{lm}\label{lin}
Fix $x,y\in \T$, $x\neq y$, and let $n\in \N$ be any integer such that
\begin{equation}\label{distanc}
\|x-y\|< \frac{1}{6q_n}.
\end{equation}
Then one of the following holds:
\begin{enumerate}[(a)]
\item \label{lem3a} $0\in \bigcup_{k=0}^{q_n-1}R^k_\alpha[x,y]$;
\item \label{lem3b} $0\notin \bigcup_{k=0}^{\left[\frac{q_{n+1}}{6}\right]}R^k_\alpha[x,y]$;
\item \label{lem3c} $0\notin \bigcup_{k=-\left[\frac{q_{n+1}}{6}\right]}^{0}R^k_\alpha[x,y]$.
\end{enumerate}
\end{lm}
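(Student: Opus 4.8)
The plan is to reformulate everything in terms of the visit times of the orbit of the point $0$ to the single small arc $[x,y]$, and then to show that if both \eqref{lem3b} and \eqref{lem3c} fail, then \eqref{lem3a} must hold. Introduce the set
\[
S:=\{k\in\Z \mid 0\in R^k_\alpha[x,y]\}=\{k\in\Z\mid -k\alpha\in[x,y]\},
\]
so that \eqref{lem3a} reads $S\cap[0,q_n)\neq\emptyset$, \eqref{lem3b} reads $S\cap[0,[q_{n+1}/6]]=\emptyset$, and \eqref{lem3c} reads $S\cap[-[q_{n+1}/6],0]=\emptyset$. I would argue by contradiction, assuming all three fail. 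The negation of \eqref{lem3a} gives $S\cap[0,q_n)=\emptyset$; combined with the negations of \eqref{lem3b} and \eqref{lem3c} this produces witnesses $p,-b\in S$ with $q_n\le p\le[q_{n+1}/6]$ (the bound $p\ge q_n$ being forced by the negation of \eqref{lem3a}) and $1\le b\le[q_{n+1}/6]$ (here $b\ge1$ since $0\notin S$).

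The crucial arithmetic input comes next. Since $-p\alpha$ and $b\alpha$ both lie in the arc $[x,y]$, whose length is $\|x-y\|<\frac{1}{6q_n}$ by \eqref{distanc}, the integer $m:=p+b$ satisfies $q_n\le m\le q_{n+1}/3$ and $\|m\alpha\|<\frac{1}{6q_n}$. I claim such an $m$ must be a multiple of $q_n$. Writing $m=jq_n+s$ with $0\le s<q_n$, we have $j\le m/q_n\le q_{n+1}/(3q_n)$, so by subadditivity of $\|\cdot\|$ and $\|q_n\alpha\|<\frac{1}{q_{n+1}}$,
\[
\|jq_n\alpha\|\le j\|q_n\alpha\|<\frac{q_{n+1}}{3q_n}\cdot\frac{1}{q_{n+1}}=\frac{1}{3q_n},
\qquad
\|s\alpha\|\le\|m\alpha\|+\|jq_n\alpha\|<\frac{1}{6q_n}+\frac{1}{3q_n}=\frac{1}{2q_n}.
\]
By \eqref{pdiam} the inequality $\|s\alpha\|<\frac{1}{2q_n}$ is impossible when $0<s<q_n$, so $s=0$ and $m=jq_n$ with $j\ge1$.

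The final step manufactures a visit inside $[0,q_n)$, contradicting the negation of \eqref{lem3a}. Consider the arithmetic progression $t_i:=-b+iq_n$, $i=0,\dots,j$, with $t_0=-b$ and $t_j=p$. The corresponding points $-t_i\alpha=b\alpha-i\,q_n\alpha$ march monotonically from $b\alpha$ to $-p\alpha$ in steps of size $\|q_n\alpha\|$, with total displacement $\|(p+b)\alpha\|\le\|x-y\|<\frac{1}{6q_n}<\frac12$; hence each $-t_i\alpha$ lies on the short subarc joining $b\alpha$ and $-p\alpha$, which is contained in $[x,y]$ because $[x,y]$ is an arc of length $\|x-y\|$ containing both endpoints (two points of such an arc, at distance at most its length, bound a subarc lying inside it). Thus $t_i\in S$ for every $i$. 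Since $t_0<0\le q_n\le t_j$ and consecutive $t_i$ differ by $q_n$, the least index $i^*$ with $t_{i^*}\ge0$ satisfies $0\le t_{i^*}=t_{i^*-1}+q_n<q_n$, giving $t_{i^*}\in S\cap[0,q_n)$ — a contradiction.

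The main obstacle is conceptual rather than computational: a naive Diophantine lower bound does not close the argument, since $\|m\alpha\|$ genuinely can be smaller than $\frac{1}{6q_n}$ for $m$ in the relevant range (e.g.\ near multiples of $q_n$). The decisive observation is that the restriction $m\le q_{n+1}/3$ forces the rigid structure $q_n\mid m$, which in turn lets the $q_n$-spaced chain of visits pass from negative to positive time and necessarily land in $[0,q_n)$. Getting this \emph{chaining across zero} to produce case \eqref{lem3a} is the heart of the proof.
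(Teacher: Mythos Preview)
Your proof is correct and takes a genuinely different route from the paper's. The paper argues constructively: assuming \eqref{lem3a} fails, it places $[x,y]$ inside a single atom $I=[i\alpha,j\alpha]$ of the partition of $\T$ by $\{0,-\alpha,\dots,-(q_n-1)\alpha\}$, notes (via \eqref{pdiam}) that one endpoint of $I$ lies at distance greater than $\tfrac{1}{6q_n}$ from $[x,y]$, and then uses the three-distance structure (the further backward orbit points $k\alpha$, $-[q_{n+1}/6]\le k\le 0$, that land in $I$ are all of the form $(i-mq_n)\alpha$ with $m\le q_{n+1}/(6q_n)$, hence stay within $\tfrac{1}{6q_n}$ of $i\alpha$) to conclude directly that \eqref{lem3b} or \eqref{lem3c} holds, the choice depending on the sign of $q_n\alpha-p_n$ and on which endpoint is far. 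You instead assume all three alternatives fail, extract witnesses $p$ and $-b$, derive the divisibility $q_n\mid(p+b)$ from \eqref{pdiam} and the bound $p+b\le q_{n+1}/3$, and then chain along the $q_n$-spaced visit times $-b,-b+q_n,\dots,p$ to force one of them into $[0,q_n)$, contradicting the failure of \eqref{lem3a}. Your argument is more self-contained---it uses only \eqref{pdiam} and the monotone drift of $iq_n\alpha$, not the explicit atom description---and is arguably cleaner; the paper's approach is more geometric and has the minor advantage of telling you \emph{which} of \eqref{lem3b}, \eqref{lem3c} actually holds (information not needed downstream).
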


\begin{proof} Assume that \eqref{lem3a} does not hold.
Consider the partition $\cP_n$ of $\T$ by $0,-\alpha,...,(-q_n+1)\alpha$.
Let $i,j\in\{-q_n+1,...,0\}$ be unique such that
\begin{equation}\label{xysub}
[x,y]\subset [i\alpha,j\alpha]=I\in\cP_n.
\end{equation}
By \eqref{distanc} and \eqref{pdiam} it follows that either $d(i\alpha,[x,y])$ or $d(j\alpha,[x,y])$ is greater then $\frac{1}{6q_n}$.
Assume without loss of generality that $d(i\alpha,[x,y])> \frac{1}{6q_n}$ (the proof in the other case is analogous).
If $q_n\alpha - p_n<0$ we will show \eqref{lem3b}, and if $q_n\alpha - p_n>0$ we will show \eqref{lem3c}.
Let us conduct the proof assuming that $q_n\alpha-p_n<0$, the proof in the other case follows the same lines.
We need to show that for every $k\in\{-\left[\frac{q_{n+1}}{6}\right],...,0\}$,
\begin{equation}\label{atm}
k\alpha\notin [x,y].
\end{equation}
Each such $k\alpha$ belongs to a unique atom of $\cP_n$.
If $k\alpha\notin I$ then \eqref{atm} holds trivially by \eqref{xysub}.
All $k\in\{-\left[\frac{q_{n+1}}{6}\right],...,0\}$ for which $k\alpha\in I$ are of the form $k=i-m_kq_n$ for $m_k\leq \frac{q_{n+1}}{6q_n}$.
Therefore
$$
d(k\alpha,[x,y])\geq d(i\alpha,[x,y])-m_k\|q_n\alpha\|>\frac{1}{6q_n}-
\frac{q_{n+1}}{6q_n}\frac{1}{q_{n+1}}\geq 0.
$$
So \eqref{atm} also holds in this case. This finishes the proof of Lemma~\ref{lin}.
\end{proof}

Now we can prove Proposition~\ref{mainprop}.

\begin{proof}[Proof of Proposition~\ref{mainprop}]
Recall that $f=g+\{\cdot\}+c$, where $g\in C^1(\T)$, $0<L\leq 1 \leq M$, $0<\beta<1$.
Fix $\epsilon< \frac{\beta L}{10^6 M^2}$.
Let $\delta_0$ be such that for every $x,y\in \T$, $\|x-y\|< \delta_0$, and every $n\in \Z$
\begin{equation}\label{c1part}
|g^{(n)}(x)-g^{(n)}(y)|<\epsilon\max\{ 1, |n| \|x-y\| \}.
\end{equation}
The existence of such $\delta_0$ follows from Lemma~\ref{c1}.
Indeed, by Lemma~\ref{c1} there exists $n_\epsilon>0$ such that for all $n$, $|n|\geq n_\epsilon$, we have $|g^{(n)}(x)-g^{(n)}(y)|<\epsilon |n| \|x-y\|$.
It is enough to define $\delta_0:=\frac{\epsilon}{n_\epsilon \sup_\T |g'|}$.
We will also assume that
\begin{equation}\label{delta0}
\delta_0 \leq \frac{\beta L}{10^5 M^2}.
\end{equation}

Fix $x,y\in \T$, $\|x-y\|< \delta_0$.
We want to define $J_{x,y}$.
We will use Lemma~\ref{lin}.
Let $n\in \N$ be unique such that
\begin{equation}\label{dxy}
\frac{\beta}{200Mq_{n+1}}<\|x-y\|\leq \frac{\beta}{200Mq_n}.
\end{equation}
Then $n$ satisfies \eqref{distanc}, since $M\geq 1$.
Define $J_{x,y}$ as follows:
\begin{enumerate}[(A)]
\item \label{p1} if $x,y$ satisfy \eqref{lem3a}, set $J_{x,y}:=[0, 2q_n-1]\cap \Z$;
\item \label{p2} if $x,y$ satisfy \eqref{lem3b}, set $J_{x,y}:=[0, \frac{\beta L}{10^5M^{2}\|x-y\|}]\cap \Z$;
\item \label{p3} if $x,y$ satisfy \eqref{lem3c}, set $J_{x,y}:=[-\frac{\beta L}{10^5M^{2}\|x-y\|},0]\cap \Z$.
\end{enumerate}
We will show that $J_{x,y}$ defined above satisfies the assertions of Proposition~\ref{mainprop}.

\smallskip

\textit{Case \eqref{p1}.}
Notice that \eqref{prop1a} in Proposition~\ref{mainprop} holds trivially by \eqref{dxy} (since $2q_n\leq \frac{\beta}{100M\|x-y\|}$).
Let us show \eqref{prop1b}.
For $m\geq 0$ let
$$
D_m:=\left|\{i\in\{0,...,m-1\}\;:\; 0\in R_\alpha^i[x,y]\}\right|
$$
(the number of times the orbit of $[x,y]$ hits the discontinuity up to time $m$).
Notice that for every $n\in \frac{10M}{L}J_{x,y}\subset [0,\frac{20M}{L}q_n]$ by \eqref{dxy} and \eqref{pdiam} we have
\begin{equation}\label{dmleq}
D_n\leq \frac{40M}{L}.
\end{equation}
Therefore for every such $n$,
using \eqref{c1part} and \eqref{dxy}, we obtain
\begin{align*}
|f^{(n)}(x)-f^{(n)}(y)|
&\leq |g^{(n)}(x)-g^{(n)}(y)|+|\sum_{i=0}^{n-1}(\{x+i\alpha\}-\{y+i\alpha\})|\\
&< \max\{ 1, n \|x-y\| \} + n\|x-y\| + D_n \\
&< \frac{M}{L}+ \frac{M}{L}+\frac{40M}{L}\\
&< \frac{50M}{L}
\end{align*}
and this finishes the proof of \eqref{prop1b}.
For \eqref{prop1c} let $U_{x,y}:=[q_n,2q_n-1]\cap \Z$.
Obviously $|U_{x,y}| >\frac{1}{10}|J_{x,y}|$.
Since $x,y$ satisfy \eqref{lem3a}, it follows that for every $n\in U_{x,y}$ we have
$$
D_n \geq 1.
$$
Therefore for every $n\in U_{x,y}$ by \eqref{c1part} and \eqref{dxy}
$$
|f^{(n)}(x)-f^{(n)}(y)|
\geq D_n-n\|x-y\|-|g^{(n)}(x)-g^{(n)}(y)|
> 1-1/2
= 1/2
$$
and this completes the proof of \eqref{prop1c}.
So if $\eqref{lem3a}$ holds then Proposition~\ref{mainprop} holds for $x,y$.

\smallskip
It remains to conduct the proof in cases \eqref{p2} and \eqref{p3}.
The proofs in both cases are completely symmetric, one just needs to switch the time direction from positive (in case \eqref{p2}) to negative (in case \eqref{p3}).
Therefore we will present the proof in case \eqref{p2}, the proof in case \eqref{p3} follows the same lines.

\smallskip

\textit{Case \eqref{p2}.}
Notice that \eqref{prop1a} follows automatically by the definition of $J_{x,y}$.
Let us show \eqref{prop1b}.
Notice that $\frac{10M}{L}J_{x,y}\subset[0,\frac{\beta}{10^4M\|x-y\|}]\subset [0,\frac{q_{n+1}}{6}]$.
Hence by \eqref{lem3b} (which gives $D_n=0$), \eqref{c1part} and \eqref{dxy} we get for every $n\in \frac{10M}{L}J_{x,y}$
\begin{align*}
|f^{(n)}(x)-f^{(n)}(y)|
&\leq n\|x-y\|+|g^{(n)}(x)-g^{(n)}(y)|\\
&\leq n\|x-y\| + \max\{ 1, |n| \|x-y\| \}\\
&\leq \frac{\beta}{10^4M} + 1\\
&<2.
\end{align*}
This gives \eqref{prop1b}.
To get \eqref{prop1c} define $U_{x,y}:=[\frac{1}{2}\frac{\beta L}{10^5M^{2}\|x-y\|}, \frac{\beta L}{10^5M^{2}\|x-y\|}]\cap \Z$.
Then trivially $|U_{x,y}|\geq \frac{1}{2}|J_{x,y}|$ (notice that $U_{x,y}\neq\emptyset$ by \eqref{delta0}).
Moreover, for every $n\in U_{x,y}$,
$$
|f^{(n)}(x)-f^{(n)}(y)|
\geq n\|x-y\|-|g^{(n)}(x)-g^{(n)}(y)|
\geq \frac{1}{2}\frac{\beta L}{10^5M^{2}}-\epsilon
> \frac{\beta L}{10^6M^{2}}
$$
by the choice of $\epsilon$.
This gives \eqref{prop1c} and finishes the proof of Proposition~\ref{mainprop}
\end{proof}

\section{Concluding remarks}
It follows by \cite{FL04} that the spectral type of von Neumann flows is purely singular.
Whether or not the maximal spectral multiplicity of von Neumann flows is finite remains however an open problem.
It follows from our result that the popular method of estimating spectral multiplicity by the rank fails in this case.

It seems that the methods used for one discontinuity can be carried out for many discontinuities (however the proof becomes more subtle).

In a forthcoming paper \cite{KaSo} we will show that the slope is an isomorphism invariant for the von Neumann flows under consideration (with one discontinuity), i.e.\ flows are non-isomorphic when the slopes of the roof functions are different.
This would mean, in turn, that even when $\alpha$ is fixed, we have considered an uncountable family of pairwise non-isomorphic von Neumann flows.

\section{Acknowledgements}
The authors would like to thank K.~Fr\c{a}czek and M.~Lema\'{n}czyk for several remarks on the subject.
The first author would like to thank A.~Katok for discussions on spectral multiplicity of von Neumann flows.
We thank M.~Lema\'{n}czyk and C.~Ulcigrai for their comments on the final version of this paper.
The second author would like to thank ICTP, Trieste, where part of this work was done, for hospitality.
The research leading to these results has received funding from the European Research Council under the European Union's Seventh Framework Programme (FP/2007-2013) / ERC Grant Agreement n.~335989.

\bigskip

\textsc{Department of Mathematics, Pennsylvania State University
}\par\nopagebreak
\textit{E-mail address}:\: \texttt{adkanigowski@gmail.com}

\medskip

\textsc{School of Mathematics, University of Bristol
}\par\nopagebreak
\textit{E-mail address}:\: \texttt{solomko.anton@gmail.com}

\end{document}